\newtheorem{theorem}{Theorem}[section]
\newtheorem{lemma}[theorem]{Lemma}
\newtheorem{proposition}[theorem]{Proposition}
\theoremstyle{definition}
\numberwithin{equation}{section}
\newtheorem*{theorem*}{Theorem}
\newcommand\eps{\varepsilon}
\newcommand\F{\mathbb{F}}
\newcommand\R{\mathbb{R}}
\newcommand\Z{\mathbb{Z}}
\begin{document}
\title{Burgess-type character sum estimates over generalized arithmetic progressions of rank $2$}

\author[Alsetri]{Ali Alsetri}
\address{Department of Mathematics, University of Kentucky\\
715 Patterson Office Tower\\
Lexington, KY 40506\\
USA}
\email{alialsetri@uky.edu}

\author[Shao]{Xuancheng Shao}
\address{Department of Mathematics, University of Kentucky\\
715 Patterson Office Tower\\
Lexington, KY 40506\\
USA}
\email{xuancheng.shao@uky.edu}

\thanks{XS was supported by NSF grant DMS-2452462. Thanks to the anonymous referee for helpful comments and suggestions.}

\subjclass[2010]{11L40, 11B30}


\maketitle
\begin{abstract}
We extend the classical Burgess estimates to character sums over proper generalized arithmetic progressions (GAPs) of rank $2$ in prime fields $\mathbb{F}_p$. The core of our proof is a sharp upper bound for the multiplicative energy of these sets, established by adapting an argument of Konyagin and leveraging tools from the geometry of numbers. A key step in our argument involves establishing new upper bounds for the sizes of Bohr sets, which may be of independent interest.
\end{abstract}

\section{Introduction}

Let $p$ be prime and let $\chi\pmod{p}$ be a nontrivial Dirichlet character. In this paper we are concerned with character sums of the form $\sum_{n \in A} \chi(n)$, where $A \subset \F_p$ is a subset with additive structures. In the classical case when $A$ is an interval or an arithmetic progression, the behavior of these character sums  is central to understanding the distribution of primes, quadratic residues, primitive roots, etc.

In $1918$, P\'{o}lya and Vinogradov \cite{Polya,Vinogradov} independently proved the following general bound for character sums over intervals.

\begin{theorem*}[P\'{o}lya-Vinogadov] 
Let $p$ be prime and let $\chi\pmod{p}$ be a nontrivial Dirichlet character. Let $I \subset \F_p$ be an interval. Then
$$
\Big|\sum_{n \in I}\chi(n)\Big| \ll p^{1/2}\log p.
$$
\end{theorem*}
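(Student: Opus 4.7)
The plan is to use the standard Fourier completion technique to convert the restricted character sum into a complete exponential sum over $\F_p$, which can then be controlled by Gauss sum identities. Write $I = \{a+1, a+2, \ldots, a+N\}$ with $1 \le N \le p$ and set $e_p(x) := e^{2\pi i x/p}$. First I would expand the indicator $\mathbf{1}_I$ in the additive Fourier basis on $\Z/p\Z$: one has $\mathbf{1}_I(n) = p^{-1} \sum_{h=0}^{p-1} c_h \, e_p(hn)$ with coefficients $c_h = \sum_{m \in I} e_p(-hm)$. Substituting and interchanging the order of summation gives
\[
\sum_{n \in I}\chi(n) \;=\; \frac{1}{p} \sum_{h=0}^{p-1} c_h \sum_{n=0}^{p-1} \chi(n)\, e_p(hn).
\]

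Next I would evaluate the inner complete sums. For $h = 0$ it vanishes because $\chi$ is nontrivial (killing the potentially large coefficient $c_0 = N$). For $h \neq 0$, the change of variables $n \mapsto h^{-1} n$ shows that $\sum_n \chi(n)\, e_p(hn) = \overline{\chi(h)}\, \tau(\chi)$, where $\tau(\chi) = \sum_n \chi(n)\, e_p(n)$ is the Gauss sum, which satisfies the classical identity $|\tau(\chi)| = p^{1/2}$.

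Third, I would estimate the $\ell^1$ mass of the Fourier coefficients. The standard geometric-sum bound yields $|c_h| \ll \min(N, 1/\|h/p\|)$, where $\|\cdot\|$ denotes distance to the nearest integer, and summing over $h \in \{1, 2, \ldots, p-1\}$ gives $\sum_{h \neq 0} |c_h| \ll p \log p$. Combining the three ingredients produces
\[
\Big|\sum_{n \in I}\chi(n)\Big| \;\le\; \frac{|\tau(\chi)|}{p} \sum_{h \neq 0} |c_h| \;\ll\; p^{-1/2} \cdot p \log p \;=\; p^{1/2}\log p,
\]
which is the asserted bound.

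There is no serious obstacle here: the only substantive input is the Gauss sum evaluation $|\tau(\chi)| = p^{1/2}$, which follows in one line from computing $|\tau(\chi)|^2$ via orthogonality of additive characters. The logarithmic factor is unavoidable in this approach, as it comes from the harmonic sum in the $\ell^1$ bound on the Fourier transform of an interval; shaving even a $\log\log p$ from it in full generality is a well-known hard problem, and it is precisely this barrier that motivates the Burgess method treated in the rest of the paper.
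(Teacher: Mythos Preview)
Your argument is correct and is precisely the classical Fourier-completion proof of the P\'olya--Vinogradov inequality: expand $\mathbf{1}_I$ in additive characters, reduce the inner sum to a Gauss sum via the substitution $n \mapsto h^{-1}n$, invoke $|\tau(\chi)| = p^{1/2}$, and control $\sum_{h\neq 0}|c_h|$ by the geometric-series bound $|c_h| \ll \|h/p\|^{-1}$ together with the harmonic sum $\sum_{1\le h<p} p/h \ll p\log p$. Each step is standard and accurate as written.

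There is nothing to compare against in the paper itself: the P\'olya--Vinogradov theorem is quoted only as background, with references to the original 1918 papers, and no proof is supplied. Your write-up would serve perfectly well as the omitted proof.
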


The P\'{o}lya-Vinogradov bound is sharp up to the $\log p$ factor and represents square-root cancellation in the character sum. The factor $\log p$ can be improved if $\chi$ has odd order or if GRH is assumed; see \cite{montgomery-vaughan,granville-sound,goldmakher,lamzouri-mangerel}.

However, the P\'{o}lya-Vinogradov estimate becomes trivial when $|I| \ll p^{1/2}\log p$. In a series of papers starting in 1962, Burgess \cite{burgess1} broke through the P\'{o}lya-Vinogradov barrier for short intervals.

\begin{theorem*}[Burgess] 
Let $p$ be prime and let $\chi\pmod{p}$ be a nontrivial Dirichlet character. Let $I \subset \F_p$ be an interval. Then for any positive integer $r \geq 2$ and any $\eps > 0$ we have
$$
\Big|\sum_{n \in I}\chi(n)\Big| \ll_{\eps,r} |I|^{1-1/r}p^{(r+1)/(4r^2)+\eps}.
$$
In particular, if $|I| \geq p^{1/4+\eps}$ for any $\eps > 0$ then
$$
\Big|\sum_{n \in I}\chi(n)\Big| \ll_{\eps} p^{-\delta}|I|
$$
for some positive constant $\delta = \delta(\eps) > 0$.
\end{theorem*}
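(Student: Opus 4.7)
The plan is to follow Burgess's classical argument, which combines the translation invariance of $I$ with the multiplicativity of $\chi$ and the Weil bound for short character sums. Let $S = \sum_{n \in I}\chi(n)$, write $N = |I|$, and introduce integer parameters $A, B \ge 1$ to be optimized at the end. For any $(a,b) \in [1,A] \times [1,B]$, translating the summation variable gives $\sum_{n \in I}\chi(n+ab) = S + O(ab)$, since $I$ and $I+ab$ differ by at most $2ab$ elements. Averaging over $(a,b)$ and using the factorization $\chi(n+ab) = \chi(a)\chi(\bar a n + b)$ (where $\bar a$ denotes the inverse of $a$ modulo $p$), then grouping terms by $y \equiv \bar a n \pmod p$, I would obtain
\[
S = \frac{1}{AB}\sum_{y \in \F_p} U(y) T(y) + O(AB),
\]
where $T(y) := \sum_{b=1}^B \chi(y+b)$ and $|U(y)| \le \mu(y) := \#\{(a,n) \in [1,A] \times I : \bar a n \equiv y \pmod p\}$.

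Next, H\"older's inequality with conjugate exponents $2r/(2r-1)$ and $2r$ gives
\[
\Big|\sum_y U(y)T(y)\Big| \le \|\mu\|_{2r/(2r-1)} \cdot \Big(\sum_{y \in \F_p}|T(y)|^{2r}\Big)^{1/(2r)}.
\]
The $2r$-th moment $M_{2r}$ of $T$ is bounded by expanding the product and invoking Weil's theorem on each non-diagonal character sum of degree at most $2r$: this yields $M_{2r} \ll_r B^r p + B^{2r}p^{1/2}$. For $\|\mu\|_{2r/(2r-1)}$, I combine $\|\mu\|_1 = AN$ with the multiplicative-energy estimate
\[
\sum_y \mu(y)^2 = \#\{(a_1,a_2,n_1,n_2) \in [1,A]^2 \times I^2 : a_1 n_2 \equiv a_2 n_1 \pmod p\} \ll (AN)^{1+o(1)},
\]
which (for $AN<p$) lifts to an equation in $\Z$ and is controlled by standard divisor estimates. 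Log-convexity of $L^q$-norms then gives $\|\mu\|_{2r/(2r-1)} \ll (AN)^{1 - 1/(2r) + o(1)}$.

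Assembling these ingredients produces
\[
|S| \ll_{r,\eps} A^{-1/(2r)} N^{1-1/(2r)} B^{-1/2} p^{1/(2r) + \eps} + AB,
\]
and the proof concludes with a parameter optimization: choosing $A \sim N^{(r-2)/(r-1)}$ and $B$ so that the two summands coincide recovers the target bound $N^{1-1/r} p^{(r+1)/(4r^2)+\eps}$. The second conclusion follows by taking $r$ sufficiently large in terms of $\eps$, because $(r+1)/(4r^2) \to 0$ while $r^{-1} < \eps$ for large $r$. I expect the main obstacle to be the sharp multiplicative-energy bound for $\mu$, whose proof rests on the interval structure of both $[1,A]$ and $I$; it is precisely the loss of this structure for general GAPs that motivates the new arguments developed in the present paper.
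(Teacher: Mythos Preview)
The paper does not contain a proof of this statement: Burgess's theorem is quoted in the introduction as a classical result with a reference to \cite{burgess1}, and no proof is supplied. There is therefore nothing in the paper to compare your argument against.

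That said, your sketch is the standard Burgess argument (translate by $ab$, use multiplicativity to regroup, apply H\"older, bound the $2r$-th moment of short character sums via Weil, and control the collision count $\sum_y \mu(y)^2$ by a divisor argument), and as an outline it is correct. Two places deserve more care if you write this out in full. First, the energy bound $\sum_y \mu(y)^2 \ll (AN)^{1+o(1)}$ requires that the congruence $a_1 n_2 \equiv a_2 n_1 \pmod p$ lift to an equation over $\Z$; when $I=[M+1,M+N]$ with $M$ large this is not automatic from $AN<p$, and the usual fix is a preliminary reduction or a slightly different counting (e.g.\ via differences $n-M$). Second, your stated optimization $A \sim N^{(r-2)/(r-1)}$ is not the choice that produces the exponent $(r+1)/(4r^2)$; the clean way is to take $B \asymp p^{1/(2r)}$ (so that the two terms in the Weil moment balance) and then choose $A$ to balance the main term against the $O(AB)$ error. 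Your deduction of the second assertion is fine in spirit: with $|I|\ge p^{1/4+\eps}$ the exponent of $p$ in $|I|^{-1/r}p^{(r+1)/(4r^2)+\eps'}$ is $-\eps/r + 1/(4r^2)+\eps'$, which is negative once $r>1/(2\eps)$ and $\eps'$ is taken small enough.
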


Assuming GRH, nontrivial estimates for the character sum can be obtained as long as $|I| \geq p^{\eps}$ for any $\eps > 0$. Nevertheless, the exponent $1/4$ in the Burgess estimate remains the state of the art to this day. Despite the fact that the P\'{o}lya-Vinogradov inequality and the Burgess estimate treat character sums over intervals of lengths in different regimes, there are still strong connections between them; see \cite{fromm-goldmakher,mangerel,granville-mangerel}.

\subsection{Character sum estimates over GAPs}

In this paper, we focus on Burgess-type estimates for character sums over generalized arithmetic progressions. A generalized arithmetic progression (GAP) of rank $d$ in $\F_p$ is a set $A \subset \F_p$ of the form 
$$
A = \{a_0 + a_1x_1 + \dots + a_dx_d : 1 \leq x_i \leq H_i\}
$$
for positive integers $H_1,\dots,H_d$ and elements $a_0,\dots,a_d \in \F_p$ with $a_1,\dots,a_d \neq 0$. The GAP $A$ is said to be proper if $|A| = \prod_{i=1}^d H_i$.  In additive combinatorics, GAPs serve as primary examples of highly structured sets and naturally arise when studying sets with small sumsets as codified by Freiman's theorem; see \cite{TaoVu}.

Clearly, GAPs of rank $1$ are precisely intervals and arithmetic progressions modulo $p$.
Our main result of this paper is a generalization of the Burgess estimate to GAPs of rank $2$.

\begin{theorem}\label{char-sum}
Let $p$ be prime and let $\chi\pmod{p}$ be a nontrivial Dirichlet character. Let $A \subset \F_p$ be a proper GAP of rank $2$. If $|A| \geq p^{1/4+\eps}$ for any $\eps > 0$ then
$$
\Big|\sum_{n \in A}\chi(n)\Big| \ll_{\eps} p^{-\delta}|A|
$$
for some positive constant $\delta = \delta(\eps) > 0$.
\end{theorem}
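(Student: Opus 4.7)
The plan is to adapt Burgess's classical argument to rank-$2$ GAPs, with the principal new input being a sharp multiplicative energy bound for such sets. Write $A = a_0 + \Phi([1,H_1]\times[1,H_2])$ where $\Phi(x_1,x_2) = a_1 x_1 + a_2 x_2$, and set $T := \sum_{n\in A}\chi(n)$. Fix an integer $r\ge 2$ and parameters $C,K_1,K_2$ with $CK_i \le \eta H_i$ for a small $\eta > 0$ to be chosen later. Let $B = \{\Phi(y_1,y_2) : 1\le y_i\le K_i\}$; for each $c\in[1,C]$ and $b\in B$, properness of $A$ ensures that $cb = \Phi(cy_1,cy_2)$ translates all but an $O(\eta)$-fraction of $A$ back into $A$. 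Averaging the identity $T = \sum_{n\in A}\chi(n+cb) + O(\eta|A|)$ over $(c,b)\in[1,C]\times B$, factoring $\chi(n+cb) = \chi(c)\chi(c^{-1}n+b)$, and substituting $m=c^{-1}n$, one obtains
\[
C|B|\cdot T \;=\; \sum_{m\in\F_p} f(m)\sum_{b\in B}\chi(m+b) \;+\; O\bigl(\eta|A|\cdot C|B|\bigr),
\]
where $f(m) := \sum_{c=1}^{C}\chi(c)\mathbf{1}_{cm\in A}$.

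The second step is H\"{o}lder's inequality with dual exponents $\frac{2r}{2r-1}$ and $2r$:
\[
\bigl(C|B|\,|T|\bigr)^{2r} \;\ll\; \Bigl(\sum_{m\in\F_p} |f(m)|^{\frac{2r}{2r-1}}\Bigr)^{2r-1}\cdot \sum_{m\in \F_p}\Bigl|\sum_{b\in B}\chi(m+b)\Bigr|^{2r} + \bigl(\eta|A|C|B|\bigr)^{2r}.
\]
The moment over $m$ in the second factor is the classical Burgess moment and, by Weil's bound together with properness of $B$, is $\ll (2r)^{r}|B|^{r}p + |B|^{2r}$. The $|f|$-moment is controlled by interpolating between the trivial pointwise bound $\|f\|_\infty \le C$ and the second moment
\[
\sum_{m\in\F_p} |f(m)|^2 \;\le\; \bigl|\{(c,c',a,a')\in[1,C]^2\times A^2 : ca'\equiv c'a\pmod p\}\bigr|,
\]
a mixed multiplicative energy of $[1,C]$ and $A$; a standard divisor bound for $[1,C]$ reduces it to the multiplicative energy $E^{\times}(A) := |\{(a,b,c,d)\in A^4 : ab\equiv cd\pmod p\}|$.

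The main obstacle is therefore the sharp estimate $E^{\times}(A)\ll_\eps |A|^{2+\eps}$, matching what is known for intervals. Following the approach of Konyagin advertised in the abstract, each collision $ab\equiv cd\pmod p$ in $A$ is rewritten as a Diophantine relation among the four lattice points in $[1,H_1]\times[1,H_2]$ that represent $a,b,c,d$ via $\Phi$; after fixing the ratio $\lambda = a/c\in\F_p^*$, one is counting lattice points in $[1,H_1]\times[1,H_2]$ satisfying a linear condition determined by $\lambda$ and the GAP data, a count that is naturally controlled by a rank-$2$ Bohr set. The paper's new upper bounds on Bohr set sizes, proved via the geometry of numbers, control this count uniformly in $\lambda$ and yield the energy bound. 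Feeding this back and balancing $r,C,K_1,K_2$ against the shift error, the $|f|$-moment, and the Weil moment produces the power saving $|T|\ll p^{-\delta}|A|$ precisely when $|A|\ge p^{1/4+\eps}$, mirroring the classical Burgess threshold.
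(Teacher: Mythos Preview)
Your overall plan is correct in spirit, but the Burgess factorization is carried out the wrong way around, and with your choice the numerology only reaches the threshold $|A|\ge p^{1/3+\eps}$, not $p^{1/4+\eps}$. After factoring $\chi(n+cb)=\chi(c)\chi(c^{-1}n+b)$ you place the GAP $B$ inside the Weil moment and the interval $[1,C]$ in the multiplier role. With the correct Weil bound $\sum_m|\sum_{b\in B}\chi(m+b)|^{2r}\ll_r |B|^r p+|B|^{2r}\sqrt p$ (your stated bound is missing the $\sqrt p$) and the essentially optimal mixed-energy estimate $\sum_m|f(m)|^2\ll C|A|p^{o(1)}$, the H\"older step yields
\[
|T|\ \ll\ |A|\,(C|A|)^{-1/(2r)}\bigl(|B|^{-r}p+\sqrt p\bigr)^{1/(2r)}p^{o(1)}.
\]
The shift constraint $CK_i\le \eta H_i$ forces $C^2|B|\le \eta^2|A|$, and optimizing $C,|B|$ under the two requirements $C|A|\gg p^{1/2}$ and $C|A||B|^r\gg p$ gives $|A|^{3r}\gg p^{(2r+1)/2}$, i.e.\ $|A|\gg p^{1/3+1/(6r)}$. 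No choice of parameters beats this.

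The paper instead factors the \emph{other} way: writing the shift as $yt$ with $y\in B$ (a rank-$2$ GAP with $|B|\asymp |A|p^{-4\eps}$) and $t\in J=[1,p^{\eps}]$, one uses $\chi(x+yt)=\chi(y)\chi(xy^{-1}+t)$ so that the short \emph{interval} $J$ sits inside the Weil moment while the GAP $B$ plays the multiplier role. The resulting $\nu(u)=|\{(x,y)\in A\times B:xy^{-1}=u\}|$ has $\sum_u\nu(u)^2\le (E_\times(A)E_\times(B))^{1/2}$, and it is here that Theorem~\ref{x-energy} is applied to \emph{both} $A$ and $B$ to get $E_\times(A),E_\times(B)\ll |A|^2\log p$. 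With $|J|\asymp p^{1/(2r)}$ the Weil terms balance, the constraint becomes $|A||B|\gg p^{1/2+o(1)}$, and since $|B|\asymp |A|$ this gives the $p^{1/4}$ threshold. In short, the key idea you are missing is that the sharp multiplicative-energy bound must be invoked for the \emph{multiplier} set, so that set should be the large GAP rather than a short interval; this is precisely what makes the jump from $1/3$ to $1/4$ possible.
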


To put our results into perspective, Chang \cite{Chang} obtained nontrivial bounds for character sums over GAPs of any fixed rank $d$, provided that $|A| \geq p^{2/5+\eps}$. See also \cite{Hanson} for analogous results when $A$ is a Bohr set. The exponent $2/5$ in Chang's result was improved to $1/3$ in \cite{Volostnov, Schoen-Shkredov}, building on an earlier improvement in \cite{Volostnov-Shkredov}. In the special case of $d=2$, the exponent of $1/3$ also follows from \cite[Corollary 2.5]{alsetri-shao} which relies on results in \cite{shao,Heath-Brown}.

 An alternative line of work has studied extensions of Burgess' method to character sums over short boxes in finite fields $\F_{p^d}$. Given a basis $\{\omega_1,\omega_2,\dots,\omega_d\}$ for the $d$-dimensional vector space $\F_{p^d}$ over $\F_p$, consider boxes $B \subset \F_{p^d}$ of the form
 $$
 B = \{\omega_1x_1 + \cdots + \omega_dx_d : x_i \in I_i\},
 $$
 where each $I_i \subset \F_p$ is an interval. For Burgess-type character sum estimates over such boxes, see \cite{burgess2, karacuba1, karacuba2, DL, Chang, Chang2, konyagin, Gabdullin}. In particular, Konyagin \cite{konyagin} obtained nontrivial estimates for the character sum when $|I_i| \geq p^{1/4+\eps}$ for each $i$.

\subsection{Multiplicative energies of GAPs in $\F_p$}
  
In obtaining Burgess-type character sum estimates over a GAP $A \subset \F_p$, a crucial role is played by the multiplicative energy $E_{\times}(A)$ of $A$ defined as
$$
E_\times(A) := |\{(a_1,a_2,a_3,a_4) \in A^4 : a_1a_2 = a_3a_4 \}|.
$$
The task of estimating multiplicative energies of GAPs belongs to the fundamental concept of the sum-product phenomenon in arithmetic combinatorics, which explores the interplay between additive and multiplicative structures. In the finite field setting, it asserts that if $A \subset \F_p$ is neither too small nor too large, then either the sumset $A+A := \{a_1+a_2: a_1,a_2 \in A\}$ or the product set $A\cdot A := \{a_1a_2: a_1, a_2 \in A\}$ must be large:
$$
|A+A| + |A\cdot A| \gg |A|^{1+c},
$$
where $c>0$ is an absolute constant. Since the seminal work of Bourgain-Katz-Tao \cite{BKT}, the constant $c$ has been improved to $c=1/4$ in \cite{MS} (under mild assumptions on $|A|$). See also \cite{MPRRS, RRS, RSS} and the references therein for related sum-product type results in finite fields. 

If $A \subset \F_p$ is a GAP of rank $d$, we expect heuristically that $|A \cdot A| \gg_d |A|^{2-o(1)}$ and $E_{\times}(A) \ll_d |A|^{2+o(1)}$ provided that $|A| \leq p^{1/2}$. In this direction, the current best estimate for $E_{\times}(A)$, given by \cite[Theorem 35]{MPRRS}, is
$$
E_{\times}(A) \ll_d |A|^{32/13}
$$
provided that $|A| \leq p^{13/23}$.

The key ingredient in Theorem \ref{char-sum} is the optimal bound for multiplicative energies of GAPs in $\F_p$ of rank $2$.

\begin{theorem}\label{x-energy}
Let $p$ be prime and let $A \subset \F_p$ be a GAP of rank $2$. Then
$$ 
E_{\times}(A) \ll \Big(|A|^2 + \frac{|A|^4}{p}\Big) \log p.
$$
\end{theorem}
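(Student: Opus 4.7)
The plan is to adapt Konyagin's divisor-type argument (originally for intervals) to rank-$2$ GAPs, with the geometry of numbers playing the role that the divisor bound played in the interval case. The starting point is the standard identity
$$
E_\times(A) = \sum_{t \in \F_p^*} |A \cap tA|^2 + O(|A|^2),
$$
obtained by grouping quadruples $(a_1,a_2,a_3,a_4) \in A^4$ with $a_1 a_2 = a_3 a_4$ by the common ratio $t = a_1/a_3$, the error term absorbing quadruples with some coordinate equal to zero.

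Writing $A = \{c_0 + c_1 x + c_2 y : (x,y) \in B\}$ with $B = [0,H_1) \times [0,H_2) \cap \Z^2$, properness of $A$ gives $|A| = H_1 H_2$ and a unique representation for each element. For each fixed $t \in \F_p^*$, the count $|A \cap tA|$ then equals the number of quadruples $(x_1,y_1,x_2,y_2) \in B \times B$ satisfying the single linear congruence
$$
c_1(x_1 - t x_2) + c_2(y_1 - t y_2) \equiv (t-1) c_0 \pmod{p},
$$
which is the cardinality of the intersection of $B^2 \subset \Z^4$ with an affine shift of a sublattice $L_t \subset \Z^4$ of index $p$. A standard lattice point counting bound gives $|A \cap tA| \lesssim \prod_{i=1}^{4} \max(1,1/\lambda_i(t))$, where $\lambda_1(t) \le \cdots \le \lambda_4(t)$ are the successive minima of $L_t$ with respect to the symmetrized box associated to $B^2$. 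Minkowski's second theorem yields $\lambda_1(t) \cdots \lambda_4(t) \asymp p/|A|^2$, so the balanced choice of minima produces the ``diagonal'' bound $|A \cap tA| \asymp \max(1, |A|^2/p)$, which matches the expected main term.

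To handle the non-generic values of $t$, I would dyadically decompose the sum. Set $S_K = \{t \in \F_p^* : |A \cap tA| \ge K\}$ and aim for an estimate of the form $|S_K| \lesssim (|A|^2 + |A|^4/p)/K^2$ at every dyadic scale. Membership $t \in S_K$ with $K$ large forces $L_t$ to contain several short vectors, which in turn forces $t$ to satisfy several simultaneous Diophantine conditions of the form $\|c_i t^* / p\| \ll \cdot$ involving the GAP parameters $c_1, c_2$; these conditions cut out a Bohr-type set, and the required bound on $|S_K|$ is precisely the ``new upper bound for the size of Bohr sets'' advertised in the abstract. Summing the dyadic contributions then yields
$$
E_\times(A) \lesssim \sum_{K\ \text{dyadic}} K^2 |S_K| \lesssim \Big(|A|^2 + \frac{|A|^4}{p}\Big) \log p.
$$

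The main obstacle is the Bohr-set size bound itself: a naive volume estimate is too lossy (it reproduces only the weaker exponents found in prior work on sum-products in $\F_p$), and one must exploit the full two-dimensional structure, noting that the short vectors of $L_t$ encode approximate integer relations $n_1 c_1 + n_2 c_2 \equiv 0 \pmod{p}$ with $(n_1,n_2)$ in small boxes. Handling the interplay between the four successive minima of $L_t$ and these two-dimensional relations, with a case analysis based on how many $\lambda_i(t)$ fall below $1$, is the technical crux. Once Theorem~\ref{x-energy} is in hand, the deduction of Theorem~\ref{char-sum} proceeds along the classical Burgess scheme and is essentially routine.
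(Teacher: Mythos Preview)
Your proposal is correct and follows essentially the same route as the paper: express $r(z)=|A\cap zA|$ as a lattice-point count in $\Z^4$, case-split on the number $s$ of successive minima $\leq 1$, dyadically decompose, and handle the dual-heavy case $s=3$ via the Bohr-set upper bound (where in fact $z$ lands in a ratio set $B/B$ of a Bohr set, giving $|Z_3(\lambda)|\leq |B_\lambda|^2$). The only step you gloss over is the preliminary reduction from an arbitrary rank-$2$ GAP to a \emph{proper symmetric} one, which removes the affine shift caused by $c_0$ and is a routine covering argument.
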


We conjecture that Theorem \ref{x-energy} holds for GAPs of any fixed rank $d$, which would imply Theorem \ref{char-sum} for GAPs of any fixed rank. See Section \ref{sec:general-d} for an explanation on why we are unable to prove this general case.

In comparison, Kerr \cite[Corollary 4]{Kerr} established Theorem \ref{x-energy} for rank-$d$ GAPs $A \subset \F_p$ of the form
$$
A = \{a_0+a_1x_1+\cdots + a_dx_d: 1 \leq x_i \leq H\}
$$
under the additional assumption that the dilated GAP
$$
A' = \{a_1x_1 + \cdots + a_dx_d : |x_i| \leq H^2\}
$$
is proper.

\subsection{Outline of the paper}

In Section \ref{sec:background} we record some basic results from the geometry of numbers and some classical character sum bounds. The proof of Theorem \ref{x-energy} is given in Section \ref{sec:x-energy}, which uses the geometry of numbers and follows the strategy set out by Konyagin \cite{konyagin} (also used in \cite{Gabdullin, Kerr}). In carrying out this strategy, we establish upper bounds for sizes of Bohr sets in Section \ref{sec:bohr}, which may be of independent interest. In Section \ref{sec:general-d}, we briefly explain why we are unable to generalize the argument to GAPs of rank $3$ or higher. Finally in Section \ref{sec:char-sum}, we deduce Theorem \ref{char-sum} from Theorem \ref{x-energy}.

\section{Background Results}\label{sec:background}

We start with notions and results from the geometry of numbers. Recall that if $L\subset \R^d$ is a lattice and $D \subset \R^d$ is a symmetric convex body, then for $1 \leq i \leq d$, the $i$th successive minimum $\lambda_i = \lambda_i(D, L)$ is defined to be the smallest real number $\lambda$ such that $\lambda D := \{\lambda x: x \in D\}$ contains $i$ linearly independent vectors from $L$. Clearly $\lambda_1 \leq \cdots \leq \lambda_d$. Minkowski's second theorem relates the sizes of the successive minima with $\operatorname{Vol}(D)$, the volume of $D$, and $\operatorname{Vol}(\R^d/L)$, the volume of a fundamental cell of $L$. See \cite[Theorem 3.30]{TaoVu} for a proof.

\begin{theorem}[Minkowksi's Second Theorem]\label{mink2nd}
Let $L \subset \R^d$ be a lattice, let $D \subset \R^d$ be a symmetric convex body, and let $\lambda_1,\dots,\lambda_d$ be the successive minima of $L$ with respect to $D$. Then
    $$
    \frac{\operatorname{Vol}(\R^d/L)}{\operatorname{Vol}(D)} \ll_d \lambda_1 \dots \lambda_d \ll_d \frac{\operatorname{Vol}(\R^d/L)}{\operatorname{Vol}(D)}.
    $$
\end{theorem}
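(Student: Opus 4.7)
The plan is to establish the two inequalities separately. The lower bound on the product $\lambda_1\cdots\lambda_d$ reduces to a clean determinant computation, whereas the matching upper bound is the substantive half of Minkowski's theorem and requires a careful packing argument.

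For the direction $\operatorname{Vol}(\R^d/L) \ll_d \lambda_1\cdots\lambda_d \operatorname{Vol}(D)$, I would first pick linearly independent vectors $v_1,\ldots,v_d \in L$ with $v_i \in \lambda_i D$, which exist by the definition of the successive minima. Writing $v_i = \lambda_i u_i$ with $u_i \in D$, the sublattice $L' = \Z v_1 + \cdots + \Z v_d \subseteq L$ has covolume
$$
[L:L']\operatorname{Vol}(\R^d/L) = |\det(v_1,\ldots,v_d)| = \lambda_1\cdots\lambda_d\,|\det(u_1,\ldots,u_d)|.
$$
By symmetry and convexity, $D$ contains the cross-polytope $\operatorname{conv}(\pm u_1,\ldots,\pm u_d)$, whose volume equals $\frac{2^d}{d!}|\det(u_1,\ldots,u_d)|$; hence $|\det(u_1,\ldots,u_d)| \leq \frac{d!}{2^d}\operatorname{Vol}(D)$. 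Combining with $[L:L'] \geq 1$ yields the desired inequality.

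The harder direction $\lambda_1\cdots\lambda_d \operatorname{Vol}(D) \ll_d \operatorname{Vol}(\R^d/L)$ is where I expect the real obstacle to lie. My plan is to follow Minkowski's original strategy: with $V_i = \operatorname{span}(v_1,\ldots,v_i)$, consider the scaled bodies $\tfrac{1}{2}\lambda_i D$ and exploit the minimality of the $\lambda_i$, which forces that any two lattice points in the interior of $\tfrac{1}{2}\lambda_i D$ that are not congruent modulo $L \cap V_{i-1}$ would contradict the definition of $\lambda_i$. Chopping $D$ via the filtration $V_0 \subset V_1 \subset \cdots \subset V_d$ and reassembling its pieces via lattice translates produces a packing of $\R^d$ whose volume density can be compared to $\operatorname{Vol}(\R^d/L)$, yielding $\lambda_1\cdots\lambda_d \operatorname{Vol}(D) \leq 2^d \operatorname{Vol}(\R^d/L)$. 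The combinatorial bookkeeping needed to turn the slices into a genuine packing is the delicate part; since this argument is classical, I would adopt the implementation given in \cite[Theorem 3.30]{TaoVu} for the precise details.
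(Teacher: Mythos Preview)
The paper does not actually prove this statement; it is recorded as a background result with the sentence ``See \cite[Theorem 3.30]{TaoVu} for a proof.'' Your proposal correctly sketches the standard argument and ultimately defers to the very same reference, so there is nothing to compare.
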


The following lemma estimates the number of lattice points in $L \cap D$ in terms of the successive minima; see \cite[Exercise 3.5.6]{TaoVu}.

\begin{lemma}\label{henkcount}
Let $L \subset \R^d$ be a lattice, let $D \subset \R^d$ be a symmetric convex body, and let $\lambda_1,\dots,\lambda_d$ be the successive minima of $L$ with respect to $D$. Then
$$
\prod_{i=1}^d\max(1,\frac{1}{\lambda_i}) \ll_d |L \cap D| \ll_d \prod_{i=1}^d\max(1,\frac{1}{\lambda_i}).
$$
\end{lemma}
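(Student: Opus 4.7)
The plan is to prove the two inequalities separately. The starting point for both is a choice of linearly independent vectors $v_1,\dots,v_d\in L$ with $v_i \in \lambda_i D$, which exist by the definition of the successive minima. For the lower bound, I would count the integer combinations $w=\sum_{i=1}^d c_i v_i$ with $|c_i| \leq \lfloor 1/(d\lambda_i)\rfloor$ when $\lambda_i \leq 1$ and $c_i = 0$ otherwise: writing $w = \sum_i (c_i\lambda_i)(v_i/\lambda_i)$ and using $\sum_i |c_i|\lambda_i \leq 1$ together with $v_i/\lambda_i \in D$, symmetry and convexity of $D$ force $w \in D$. Linear independence of the $v_i$'s keeps the combinations distinct, producing $\gg_d \prod_i \max(1,1/\lambda_i)$ points of $L\cap D$.

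For the upper bound, I would proceed by induction on $d$. The base case $d=1$ is a direct count on a symmetric interval $D \subset \R$, yielding $|L \cap D| \ll \max(1, 1/\lambda_1)$. For the inductive step, pick $v_1 \in L$ achieving $\lambda_1$ (necessarily primitive, since any proper integer divisor of $v_1$ in $L$ would have smaller $\|\cdot\|_D$-norm) and consider the projection $\pi\colon \R^d \to \R^d/\R v_1$; then $\pi(L)$ is a lattice of rank $d-1$ and $\pi(D)$ is a symmetric convex body. Decomposing
$$
|L\cap D| = \sum_{\bar w \in \pi(L\cap D)} \bigl|\pi^{-1}(\bar w) \cap L \cap D\bigr|,
$$
each fiber consists of lattice points on a line parallel to $v_1$ inside a translate of a chord of $D$, and the classical fact that the longest chord of a symmetric convex body in direction $v_1$ passes through the origin bounds each fiber size by $\ll \max(1,1/\lambda_1)$.

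The main obstacle is estimating $|\pi(L\cap D)| \leq |\pi(L) \cap \pi(D)|$ by the inductive hypothesis: the successive minima $\mu_1 \leq \cdots \leq \mu_{d-1}$ of the projected pair satisfy $\mu_i \leq \lambda_{i+1}$ trivially, but to close the induction one needs the matching inequality $\mu_i \gg_d \lambda_{i+1}$. I would establish this by a lifting argument, using the primitivity of $v_1$ and $v_1 \in \lambda_1 D$ to choose lifts of a family in $\pi(L)$ realizing the $\mu_i$'s that sit in $L$ and in $O_d(\mu_i) D$; these yield $d-1$ vectors in $L$ linearly independent modulo $v_1$ and hence witness $\lambda_{i+1} \ll_d \mu_i$. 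Combining the fiber count, the inductive hypothesis $|\pi(L)\cap\pi(D)| \ll_d \prod_{i=1}^{d-1}\max(1,1/\mu_i)$, and this lower bound on the $\mu_i$'s produces the desired estimate $|L\cap D| \ll_d \prod_{i=1}^d \max(1,1/\lambda_i)$.
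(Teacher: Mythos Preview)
Your argument is correct. The paper does not supply its own proof of this lemma; it simply cites \cite[Exercise 3.5.6]{TaoVu}, so there is no ``paper's approach'' to compare against beyond that reference. What you have written is a clean, self-contained proof along standard lines.

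One remark on the only delicate step, the inequality $\mu_i \gg_d \lambda_{i+1}$: your lifting produces $w_i' \in (\mu_i + \tfrac{1}{2}\lambda_1)D$, and to conclude $w_i' \in O_d(\mu_i)D$ you implicitly use $\lambda_1 \ll \mu_i$. This is indeed true, and for the simple reason that the lift $w_1'$ (or any $w_i'$) is itself a nonzero lattice vector, hence has $D$-gauge at least $\lambda_1$; that forces $\lambda_1 \le \mu_1 + \tfrac{1}{2}\lambda_1$, i.e.\ $\mu_1 \ge \tfrac{1}{2}\lambda_1$, and therefore $\mu_i \ge \tfrac{1}{2}\lambda_1$ for all $i$. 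With that, $\lambda_{i+1} \le \mu_i + \tfrac{1}{2}\lambda_1 \le 2\mu_i$, and the induction closes with explicit constant $2^{d-1}$. You might make this half-line explicit in a final write-up, since it is the hinge of the inductive step.
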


The polar lattice $L^*$ of a lattice $L\subset \R^d$ and the polar body $D^*$ of a symmetric convex body $D\subset \R^d$ are defined as
$$
 L^* = \{x \in \R^d : \langle x,y \rangle \in \Z \text{ for all } y \in L \},  \ \  D^* = \{x \in \R^d : \langle x, y \rangle \leq 1 \text{ for all } y \in D \},
$$
where $\langle x,y\rangle$ denotes the standard inner product on $\R^d$. The successive minima of $L$ with respect to $D$ and the successive minima of $L^*$ with respect to $D^*$ can be related by a result of Mahler \cite{Mahler}.

\begin{lemma}\label{polar-minima}
    Let $L \subset \R^d$ be a lattice, let $D \subset \R^d$ be a symmetric convex body, and let $L^*$ and $D^*$ be the polar lattice of $L$ and the polar body of $D$, respectively. Let $\lambda_1,\dots, \lambda_d$ be the successive minima of $L$ with respect to $D$ and let $\lambda_1^*,\dots, \lambda_d^*$ be the successive minima of $L^*$ with respect to $D^*$. For each $1 \leq i \leq d$ we have
    $$
    1 \ll_d \lambda_i\lambda_{d-i+1}^* \ll_d 1.
    $$
    
    \end{lemma}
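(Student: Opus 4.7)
The plan is to establish the two sides of the inequality separately: the lower bound $\lambda_i\lambda_{d-i+1}^* \gg_d 1$ (in fact $\geq 1$) follows from a short linear algebra / dimension-counting argument, while the upper bound $\lambda_i\lambda_{d-i+1}^* \ll_d 1$ follows from Minkowski's second theorem applied to both $(L,D)$ and $(L^*,D^*)$ combined with the standard volume product estimate $\operatorname{Vol}(D)\operatorname{Vol}(D^*)\asymp_d 1$ for polar bodies.

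For the lower bound I would choose linearly independent $v_1,\dots,v_i\in L$ with $v_j\in \lambda_i D$ (guaranteed by the definition of the $i$-th successive minimum) and linearly independent $w_1,\dots,w_{d-i+1}\in L^*$ with $w_k\in\lambda_{d-i+1}^*D^*$. The subspace $V=\operatorname{span}(v_1,\dots,v_i)$ has dimension $i$, so its orthogonal complement $V^\perp$ has dimension $d-i$; since there are $d-i+1$ vectors $w_k$, at least one of them lies outside $V^\perp$. Thus $\langle v_j,w_k\rangle\neq 0$ for some $j,k$, and since $v_j\in L$, $w_k\in L^*$ we get $|\langle v_j,w_k\rangle|\geq 1$. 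On the other hand, since $D$ is symmetric and $v_j/\lambda_i\in D$, $w_k/\lambda_{d-i+1}^*\in D^*$, the defining property of the polar body yields $|\langle v_j,w_k\rangle|\leq \lambda_i\lambda_{d-i+1}^*$. Combining the two estimates gives $\lambda_i\lambda_{d-i+1}^*\geq 1$.

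For the upper bound, I would apply Theorem~\ref{mink2nd} twice, once to $(L,D)$ and once to $(L^*,D^*)$, to obtain
$$
\prod_{i=1}^d\lambda_i \asymp_d \frac{\operatorname{Vol}(\R^d/L)}{\operatorname{Vol}(D)},\qquad \prod_{i=1}^d\lambda_i^* \asymp_d \frac{\operatorname{Vol}(\R^d/L^*)}{\operatorname{Vol}(D^*)}.
$$
Multiplying and using the duality identity $\operatorname{Vol}(\R^d/L)\operatorname{Vol}(\R^d/L^*)=1$ together with the classical bounds $\operatorname{Vol}(D)\operatorname{Vol}(D^*)\asymp_d 1$ (the upper bound is Blaschke--Santal\'o, and a $d$-dependent lower bound goes back to Mahler; either side is more than enough here since we allow implicit constants depending on $d$) gives $\prod_{i=1}^d \lambda_i\lambda_i^*\asymp_d 1$. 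Reindexing the product on the left as $\prod_{i=1}^d \lambda_i\lambda_{d-i+1}^*$ and noting that each of these $d$ factors is $\geq 1$ by the lower bound already proved, we conclude that each factor is $\ll_d 1$.

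The only non-elementary input in this plan is the volume-product estimate for polar convex bodies; the rest is essentially bookkeeping plus the short dimension count. Since we are content with constants depending on $d$, one does not need any sharp form of Mahler's conjecture, so I expect this step to cause no serious difficulty. A bonus of this structure is that the lower bound is proved first and is then fed back to extract individual upper bounds from a single product estimate, bypassing any need for a direct construction of ``dual'' test vectors realizing the upper bound.
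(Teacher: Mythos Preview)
Your argument is correct. Note, however, that the paper does not actually give a proof of this lemma: it is recorded as a background result and simply attributed to Mahler~\cite{Mahler}. What you have written is essentially the standard proof of Mahler's transference inequalities: the lower bound $\lambda_i\lambda_{d-i+1}^*\geq 1$ via the dimension count and integrality of the pairing on $L\times L^*$, and the upper bound by multiplying the two instances of Minkowski's second theorem, using $\operatorname{Vol}(\R^d/L)\operatorname{Vol}(\R^d/L^*)=1$ and a two-sided bound on the volume product $\operatorname{Vol}(D)\operatorname{Vol}(D^*)$, then feeding the lower bound back in to isolate individual factors. The only ingredient you invoke beyond what the paper already records is the estimate $\operatorname{Vol}(D)\operatorname{Vol}(D^*)\asymp_d 1$; since constants are allowed to depend on $d$, Mahler's own lower bound together with the Blaschke--Santal\'o upper bound is more than sufficient, so there is no gap.
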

    
We now turn to character sum estimates. The following lemma is a general version of Burgess' argument, which allows us to bound character sums in terms of the multiplicative energy; see \cite[Lemma 5.1]{Hanson}.

\begin{lemma}\label{char-sum-energy}
Let $p$ be prime and let $\chi \pmod{p}$ be a nontrivial Dirichlet character. Let $A,B,J \subset \F_p$ be subsets. Define
    $$
    \nu(u) = |\{(x,y) \in A \times B : xy^{-1} = u \}|
    $$
    for each $u \in \F_p$. Then for any positive integer $r$ we have
    \begin{align*}
        \sum_{u \in \F_p}\nu(u)\Big|\sum_{t \in J}\chi(u + t)\Big| \leq (|A||B|)^{1-\frac{1}{r}}(E_{\times}(A)E_\times (B))^{\frac{1}{4r}}(|J|^{2r}2r\sqrt{p} + (2r|J|)^rp )^{\frac{1}{2r}}. 
    \end{align*}
    \end{lemma}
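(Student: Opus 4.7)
The plan is to bound the left-hand side by a sequence of H\"older and Cauchy--Schwarz inequalities, terminating in a classical $2r$-th moment estimate for completed character sums. Writing $g(u) := \sum_{t \in J}\chi(u+t)$, the quantity to estimate is $S := \sum_{u}\nu(u)|g(u)|$. First, H\"older's inequality with exponents $r/(r-1)$ and $r$, together with the identity $\sum_u \nu(u) = |A||B|$, gives
$$
S \leq (|A||B|)^{1-1/r}\Big(\sum_u \nu(u)|g(u)|^r\Big)^{1/r},
$$
and then Cauchy--Schwarz yields
$$
\sum_u \nu(u)|g(u)|^r \leq \Big(\sum_u\nu(u)^2\Big)^{1/2}\Big(\sum_u|g(u)|^{2r}\Big)^{1/2}.
$$

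The next step is to bound $\sum_u \nu(u)^2$ by $\sqrt{E_\times(A)E_\times(B)}$. Expanding the square, this quantity counts quadruples $(a_1,a_2,b_1,b_2) \in A^2 \times B^2$ satisfying $a_1 b_2 = a_2 b_1$, i.e.\ the mixed multiplicative energy between $A$ and $B^{-1} := \{b^{-1}: b \in B\}$. Applying Fourier analysis on $\F_p^\times$ and invoking the standard identity $\sum_\psi |\sum_{a \in A}\psi(a)|^4 = (p-1)E_\times(A)$, a single application of Cauchy--Schwarz on the multiplicative character side gives the desired bound, using also the trivial symmetry $E_\times(B^{-1})=E_\times(B)$.

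Finally, for the $2r$-th moment $\sum_{u \in \F_p}|g(u)|^{2r}$, expanding as a $2r$-fold sum over $(t_1,\dots,t_r,s_1,\dots,s_r) \in J^{2r}$ and performing the inner sum over $u$ first produces character sums of the form $\sum_u \chi(R(u))$ with $R(u) = \prod_i(u+t_i)/\prod_j(u+s_j)$ of total degree at most $2r$. Each non-degenerate inner sum is bounded by $2r\sqrt{p}$ via Weil's theorem, while the diagonal contributions, where the multisets $\{t_i\}$ and $\{s_j\}$ coincide, are trivially at most $p$ and number at most $r!|J|^r \leq (2r|J|)^r$; the resulting bound is $\sum_u|g(u)|^{2r} \leq 2r\sqrt{p}|J|^{2r} + (2r|J|)^r p$. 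Substituting this and the energy estimate into the two displays above gives the claim. The only substantial input is Weil's theorem for character sums over algebraic curves, which drives the moment bound; the remaining steps are elementary rearrangements whose only subtlety is matching the exponents in the stated form.
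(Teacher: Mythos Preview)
Your argument is correct and is exactly the standard Burgess computation underlying \cite[Lemma 5.1]{Hanson}, which the paper simply cites (after replacing $B$ by $B^{-1}$ and using $E_\times(B^{-1})=E_\times(B)$) rather than reproving. One minor imprecision: in the Weil step the degenerate tuples are not only those with $\{t_i\}=\{s_j\}$ as multisets, but all tuples for which every linear factor of $\prod_i(X+t_i)/\prod_j(X+s_j)$ has net multiplicity divisible by $\ord(\chi)$; when $\ord(\chi)\le r$ this includes non-diagonal tuples, though the stated bound $(2r|J|)^r$ on their number remains valid by the usual combinatorial count.
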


This is our key tool in deducing Theorem \ref{char-sum} from Theorem \ref{x-energy}. It differs slightly from \cite[Lemma 5.1]{Hanson} where $\nu(u)$ is defined as $\nu(u)=\{(x,y) \in A \times B: xy = u\}$ instead. Our version follows by first removing $0$ from $B$ (if necessary) and then applying \cite[Lemma 5.1]{Hanson} with $B$ replaced by $B^{-1}$ and noting that $E_{\times}(B) = E_{\times}(B^{-1})$ when $0 \notin B$.

\section{Bounding the sizes of Bohr sets}\label{sec:bohr}

Let $p$ be prime. Let $a_1,\cdots,a_d \in \F_p\setminus\{0\}$ and let $\eta_1,\cdots,\eta_d \in (0,1/2)$. Let $\Gamma = (a_1,\cdots,a_d)$, $\eta = (\eta_1,\cdots,\eta_d)$, and define the Bohr set
$$
B = B(\Gamma,\eta) := \{x \in \F_p: \|a_ix/p\| \leq \eta_i\text{ for each }1 \leq i \leq d\},
$$
where $\|\cdot\|$ denotes the distance to the nearest integer. By the pigeonhole principle, one has the lower bound
$$
|B| \gg_d (\eta_1\cdots \eta_d) p.
$$
(See \cite[Lemma 4.20]{TaoVu} for a proof in the case $\eta_1=\cdots=\eta_d$). In the other direction, we have the trivial upper bound
$$
|B| \ll \min(\eta_1,\cdots,\eta_d)p,
$$
which follows from simply considering one of the conditions $\|a_ix/p\| \leq \eta_i$. This upper bound is sharp when $a_1 = \cdots = a_d$.

We are interested in obtaining non-trivial upper bounds for $|B|$ when $a_1,\cdots,a_d$ are assumed to satisfy certain non-degeneracy conditions. To describe our bounds, we need to define two quantities $t(\Gamma,\eta)$ and $\delta(\Gamma,\eta)$ as follows. Define the box $R = R_{\eta} \subset \R^d$ by
$$
R_{\eta} = [-\eta_1,\eta_1] \times \cdots \times [-\eta_d,\eta_d]
$$ 
and the lattice $L = L_{\Gamma} \subset \R^d$ by
$$
L_{\Gamma} = \Z^d + \{ p^{-1}(a_1x, \cdots, a_dx): x \in \Z\}.
$$
Note that there is a one-to-one correspondence between $B(\Gamma,\eta)$ and $R_{\eta} \cap L_{\Gamma}$ by sending $x \in B(\Gamma,\eta)$ to the integral shift of $(a_1x/p, \cdots,a_dx/p)$ which lies in $R_\eta$. 

Define $t(\Gamma,\eta)$ to be the largest positive integer $t$ such that there are $t$ linearly independent vectors in $R_{\eta} \cap L_{\Gamma}$. Clearly $0 \leq t \leq d$, and $t > 0$ if $B$ contains a non-zero element. Intuitively one can think of $t(\Gamma,\eta)$ as the ``true dimension" of the Bohr set $B$.

Define $\delta(\Gamma,\eta)$ to be the supremum of all real numbers $\delta$ such that the equation
$$
a_1u_1 + \cdots + a_du_d \equiv 0\pmod{p}, \ \ u_i \in \Z\text{ and }|u_i| \leq\delta/\eta_i\text{ for each }1 \leq i \leq d
$$
only has the trivial solution $u_1=\cdots=u_d=0$. Clearly $\min \eta_i \leq \delta \leq p\cdot \max\eta_i$. We will see in the proof of Proposition \ref{prop:bohr} that if $t < d$ then $\delta \ll_d 1$. Note that if $\eta_1=\cdots=\eta_d$ and $\delta > \eta_1$, then all $a_1,\cdots,a_d$ must be distinct, ruling out the most degenerate situation. Intuitively, the larger the quantity $\delta(\Gamma,\eta)$, the more ``independent" the frequencies $a_1,\cdots,a_d$ are.

\begin{proposition}\label{prop:bohr}
Let $p$ be prime. Let $a_1,\cdots,a_d \in \F_p\setminus\{0\}$ and let $\eta_1,\cdots,\eta_d \in (0,1/2)$. Let $\Gamma = (a_1,\cdots,a_d)$ and $\eta = (\eta_1,\cdots,\eta_d)$. Define the Bohr set $B = B(\Gamma,\eta)$ and the quantities $t = t(\Gamma,\eta)$, $\delta = \delta(\Gamma,\eta)$ as above. Then
$$
|B| \ll_d \delta^{t-d} (\eta_1\cdots\eta_d)p.
$$
\end{proposition}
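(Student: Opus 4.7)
The plan is to translate the Bohr set $B$ into the lattice-point count $|R_\eta \cap L_\Gamma|$ via the bijection given in the setup, then bring to bear the three geometry-of-numbers tools from Section~\ref{sec:background}. Applying Lemma~\ref{henkcount} gives $|B| \ll_d \prod_{i=1}^d \max(1, 1/\lambda_i)$, where $\lambda_1 \leq \cdots \leq \lambda_d$ are the successive minima of $L_\Gamma$ with respect to $R_\eta$. The definition of $t = t(\Gamma,\eta)$ translates directly into $\lambda_t \leq 1 < \lambda_{t+1}$ (with the natural conventions at the endpoints), so the product collapses to $\prod_{i=1}^t (1/\lambda_i) = (\lambda_1 \cdots \lambda_d)^{-1} \prod_{i=t+1}^d \lambda_i$.

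Next, I would compute $\operatorname{Vol}(R_\eta) = 2^d \eta_1\cdots \eta_d$ and $\operatorname{Vol}(\R^d/L_\Gamma) = 1/p$ (the latter because $\Z^d$ sits inside $L_\Gamma$ with index $p$, using that the $a_i$ are nonzero mod $p$), so Minkowski's second theorem (Theorem~\ref{mink2nd}) yields $\lambda_1\cdots\lambda_d \asymp_d (p\eta_1\cdots\eta_d)^{-1}$ and hence
$$
|B| \ll_d (p\eta_1\cdots\eta_d) \prod_{i=t+1}^d \lambda_i.
$$
To control the tail of large minima I would switch to the polar picture via Mahler's duality (Lemma~\ref{polar-minima}): $\lambda_i \asymp_d 1/\lambda_{d-i+1}^*$, where the $\lambda_j^*$ are the successive minima of the polar lattice $L_\Gamma^*$ with respect to the polar body $R_\eta^*$. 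This gives $\prod_{i=t+1}^d \lambda_i \asymp_d \prod_{j=1}^{d-t} 1/\lambda_j^* \leq (\lambda_1^*)^{-(d-t)}$, reducing the whole problem to the single lower bound $\lambda_1^* \gg_d \delta$.

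For the final step I would identify the polar data directly from the definitions: one checks $R_\eta^* = \{x \in \R^d : \sum_i \eta_i|x_i| \leq 1\}$, and, testing the duality condition on the generators $e_j$ and $p^{-1}(a_1,\ldots,a_d)$ of $L_\Gamma$, $L_\Gamma^* = \{x \in \Z^d : a_1 x_1 + \cdots + a_d x_d \equiv 0 \pmod p\}$. Hence $\lambda_1^*$ is exactly the minimum of $\sum_i \eta_i|x_i|$ over non-zero $x \in L_\Gamma^*$. If this minimum were smaller than some $\delta' < \delta$, the minimizer $x$ would give a non-trivial integer solution to $a_1 x_1 + \cdots + a_d x_d \equiv 0 \pmod p$ with $|x_i| \leq \delta'/\eta_i$ for every $i$, contradicting the definition of $\delta$; therefore $\lambda_1^* \geq \delta$, and the stated bound $|B| \ll_d \delta^{t-d}(\eta_1\cdots\eta_d)p$ falls out. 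The main care required is in the two dualizations---correctly writing down $R_\eta^*$ and $L_\Gamma^*$, and correctly pairing the $\lambda_i$ with $\lambda_{d-i+1}^*$ in the Mahler step---while everything else is a short direct computation.
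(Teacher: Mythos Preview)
Your proposal is correct and follows essentially the same approach as the paper's proof: both use Lemma~\ref{henkcount} to bound $|B|$ by $\prod_{i\le t}\lambda_i^{-1}$, Minkowski's second theorem to evaluate $\lambda_1\cdots\lambda_d$, and Mahler's duality (Lemma~\ref{polar-minima}) together with the identification of $L_\Gamma^*$ and $R_\eta^*$ to deduce $\lambda_1^*\ge\delta$ and thereby control $\prod_{i>t}\lambda_i$. The only cosmetic difference is that the paper bounds each $\mu_i$ for $i>t$ by $\mu_d\ll\delta^{-1}$, whereas you bound each $\lambda_j^*$ for $j\le d-t$ below by $\lambda_1^*\ge\delta$; these are equivalent uses of the monotonicity of successive minima.
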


In comparison, earlier results \cite[Lemma 13]{Kerr} or \cite[Proposition 2.1]{Shao-PV} give
$$
|B| \ll \max(1, \delta^{-d}) (\eta_1\cdots \eta_d)p.
$$
When $t < d$, our Proposition \ref{prop:bohr} saves an extra factor of $\delta^t$ compared to the previous bound. It is essentially this saving which allows us to remove the additional properness assumption on dilates of $A$ in \cite[Theorem 3]{Kerr} when $d=2$.

Note that if $t=d$ then our upper bound for $|B|$ in Proposition \ref{prop:bohr} matches the lower bound (up to constants). If $t < d$, we expect that the factor $\delta^{t-d}$ in the upper bound to be sharp (up to constants). In the case $t=1$ this is demonstrated by the following example. Take $\eta_1=\cdots = \eta_d = \eta$ and $a_i \asymp (\delta/\eta)^{i-1}$ for $1 \leq i \leq d$. Then $B$ contains the interval $[1, p\eta/a_d]$ and hence
$$
|B| \gg p\eta (\delta/\eta)^{-(d-1)} = \delta^{1-d} \eta^d p.
$$

\begin{proof}[Proof of Proposition \ref{prop:bohr}]
In this proof, we allow all implied constants to depend on $d$.
As defined earlier, consider the box $R = R_{\eta} \subset \R^d$ defined by
$$
R = [-\eta_1,\eta_1] \times \cdots \times [-\eta_d,\eta_d],
$$ 
and the lattice $L = L_{\Gamma} \subset \R^d$ defined by
$$
L = \Z^d + \{ p^{-1}(a_1x, \cdots, a_dx): x \in \Z\}.
$$
For $1 \leq i \leq d$, let $\mu_i$ be the $i$th successive minimum of $R$ with respect to $L$. By Minkowski's second theorem (Theorem \ref{mink2nd}), we have
$$
\mu_1\mu_2\cdots\mu_d \asymp \frac{1}{(\eta_1\cdots \eta_d)p}.
$$
By Lemma \ref{henkcount}, we have
$$
|B| = |R \cap L| \ll \prod_{j=1}^d \max(1, \mu_j^{-1}).
$$
Consider also the dual body
$$
R^* = R_{\eta}^* = \{(u_1,\cdots,u_d) \in \R^d: \eta_1|u_1|+\cdots+\eta_d|u_d| \leq 1\}
$$
and the dual lattice
$$
L^* = L_{\Gamma}^* = \{(x_1,\cdots,x_d) \in \Z^d: a_1x_1+\cdots+a_dx_d \equiv 0\pmod{p}\}.
$$
By the definition of $\delta$, $\eps R^* \cap L^* = \{0\}$ for any $\eps < \delta$. Hence the first successive minimum of $R^*$ with respect to $L^*$ satisfies $\mu_1^* \geq \delta$. Hence $\mu_{d} \ll \delta^{-1}$ by Lemma \ref{polar-minima}. By the definition of $t$, we have $\mu_t \leq 1$ and $\mu_{t+1}>1$. It follows that
$$
|B| \ll (\mu_1\cdots\mu_t)^{-1} = (\mu_1\cdots\mu_d)^{-1} (\mu_{t+1}\cdots\mu_d) \ll (\eta_1\cdots\eta_d) p \cdot (\delta^{-1})^{d-t}.
$$
This completes the proof.
\end{proof}

\section{Proof of Theorem \ref{x-energy}}\label{sec:x-energy}

In this section, we prove Theorem \ref{x-energy} which establishes sharp upper bounds on the multiplicative energy of a GAP $A$ of rank $2$ in $\F_p$. First we reduce to the case when $A$ is symmetric and proper.

\begin{proposition}\label{prop:x-energy-symmetric}
Let $p$ be prime and let $A \subset \F_p$ be a symmetric proper GAP of rank $2$. Then
$$ 
E_{\times}(A) \ll \Big(|A|^2 + \frac{|A|^4}{p}\Big) \log p.
$$
\end{proposition}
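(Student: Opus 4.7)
The plan is to follow Konyagin's strategy, translating the multiplicative energy into a count of lattice points in a four-dimensional box and then controlling the distribution of the ratios $\lambda$ for which this count is large via the Bohr-set estimate of Proposition \ref{prop:bohr}. Write $A = \{a_1 x_1 + a_2 x_2 : |x_i| \leq H_i\}$, so that $|A| \asymp H_1 H_2$, and set $r(\lambda) := |A \cap \lambda A|$. After absorbing the $O(|A|^2)$ contribution from quadruples involving a zero coordinate, one has $E_\times(A) \ll |A|^2 + \sum_{\lambda \in \F_p^*} r(\lambda)^2$. I will dyadically decompose over the level sets $S_\tau := \{\lambda \in \F_p^* : r(\lambda) \geq \tau\}$ for dyadic $\tau \in [1,|A|]$, reducing the task to showing $\tau^2 |S_\tau| \ll (|A|^2 + |A|^4/p)\log p$ for each such $\tau$.

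The key reformulation identifies $r(\lambda)$ with $|\Lambda_\lambda \cap B|$, where $\Lambda_\lambda \subset \Z^4$ is the index-$p$ sublattice cut out by
\[ a_1 u_1 + a_2 u_2 \equiv \lambda(a_1 v_1 + a_2 v_2) \pmod{p}, \]
and $B = [-H_1,H_1] \times [-H_2,H_2] \times [-H_1,H_1] \times [-H_2,H_2]$; properness of $A$ ensures that each pair $(a,b) \in A \times A$ with $a = \lambda b$ corresponds to a unique lattice point. Minkowski's second theorem (Theorem \ref{mink2nd}) then gives $\mu_1 \mu_2 \mu_3 \mu_4 \asymp p/|A|^2$ for the successive minima of $B$ with respect to $\Lambda_\lambda$, and Lemma \ref{henkcount} yields $r(\lambda) \asymp \prod_i \max(1, \mu_i^{-1})$. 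Hence $r(\lambda) \geq \tau$ forces several of the $\mu_i$ to be small, i.e., $\Lambda_\lambda$ to contain many short vectors inside $B$.

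The heart of the argument is to show that such popular ratios are rare. Each $\lambda \in S_\tau$ admits $\tau$ representations $\lambda \equiv (a_1 u_1 + a_2 u_2)/(a_1 v_1 + a_2 v_2) \pmod{p}$ with $(u_1,u_2,v_1,v_2) \in B$; cross-multiplying any two such representations eliminates $\lambda$ and yields a nontrivial relation $a_1 m_1 + a_2 m_2 \equiv 0 \pmod{p}$ with small integer coefficients, which by properness of $A$ is heavily constrained. I anticipate that this analysis realizes $S_\tau$ as a subset of (a translate of) a Bohr set $B(\Gamma,\eta)$ whose frequencies are built from $a_1, a_2$ and whose widths are calibrated by $\tau$, $H_1$ and $H_2$. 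Proposition \ref{prop:bohr}, applied with the non-degeneracy parameter $\delta$ controlled via the properness of $A$, should then yield $|S_\tau| \ll (|A|^2/\tau^2)(1 + |A|^2/p)$ up to a logarithmic loss, after which a dyadic summation over $\tau \in [1,|A|]$ delivers the claimed bound $E_\times(A) \ll (|A|^2 + |A|^4/p)\log p$.

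The main obstacle will be this structural step: one must identify the right Bohr set and choose its parameters so that the true dimension $t$ and non-degeneracy quantity $\delta$ in Proposition \ref{prop:bohr} produce exactly the savings needed. It is precisely the $\delta^{t-d}$ improvement over the earlier bound \cite[Lemma 13]{Kerr} that should allow us to avoid the auxiliary properness assumption on dilates of $A$ present in \cite[Theorem 3]{Kerr}. The geometry-of-numbers bookkeeping that coordinates the successive minima of $\Lambda_\lambda$, the dual frequencies, and the Bohr-set widths is where the real work lies; the dyadic summation at the end should then be routine.
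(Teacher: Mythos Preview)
Your high-level plan matches the paper's: interpret $r(\lambda)$ as $|\Lambda_\lambda \cap B|$, control it through the successive minima $\mu_i$ via Theorem~\ref{mink2nd} and Lemma~\ref{henkcount}, and invoke Proposition~\ref{prop:bohr} at the crucial step. But the mechanism you propose for that step has a real gap. Cross-multiplying two representations $\lambda \equiv a/b \equiv a'/b'$ with $a,a',b,b' \in A$ gives $ab' \equiv a'b \pmod p$, which after expanding $a = a_1 u_1 + a_2 u_2$, etc., is a \emph{quadratic} relation in $a_1,a_2$, not one of the form $a_1 m_1 + a_2 m_2 \equiv 0$; properness of $A$ says nothing about such relations, and no Bohr-set condition on $\lambda$ falls out. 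In the paper the Bohr set enters instead through the \emph{dual} lattice: one computes $\Lambda_\lambda^* = \Z^4 + p^{-1}\{(\lambda a_1 t, \lambda a_2 t, -a_1 t, -a_2 t): t \in \Z\}$, so a short dual vector at scale $\sigma$ forces both $t\bmod p$ and $\lambda t\bmod p$ to lie in the rank-$2$ Bohr set $B_\sigma = \{x: \|a_i x/p\| \leq \sigma/H_i\}$. Thus the popular ratios lie in the \emph{quotient} $B_\sigma/B_\sigma$, giving $|\{\lambda: \mu_1^*(\lambda)\sim\sigma\}| \leq |B_\sigma|^2$, and Proposition~\ref{prop:bohr} (with $\delta \geq \sigma$ from properness of $A$) is applied to $|B_\sigma|$.

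Moreover, the paper does not decompose by level sets $S_\tau$ of $r(\lambda)$ but by the index $s$ with $\mu_s \leq 1 < \mu_{s+1}$, and the cases are handled quite differently. Only $s=3$ uses the dual-lattice/Bohr-set argument above; $s=1$ and $s=4$ are elementary, while $s=2$ requires a separate primal lower bound $\mu_2^2 \gg \min(H_1^{-2},\, p H_2^{-2})$ in the range $\mu_1 \in [1/H_2, 1/H_1]$, obtained by analyzing two independent short vectors directly (no Bohr set involved). A single dyadic decomposition over $\tau$ does not by itself separate these regimes, and the uniform bound $|S_\tau| \ll (|A|^2/\tau^2)(1+|A|^2/p)$ you are aiming for is not what the structure actually delivers.
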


\begin{proof}[Proof of Theorem \ref{x-energy} assuming Proposition \ref{prop:x-energy-symmetric}]
Let $A \subset \F_p$ be a GAP of rank $2$. We can find a proper GAP $B \subset \F_p$ of rank at most $2$ containing $A$ with $B-B$ also proper and $|B| \ll |A|$. This follows, for example, by applying \cite[Corollary 1.18]{TaoVu-GAP} to a suitable translate of $A$. It suffices to show that
$$
E_{\times}(B) \ll \Big(|B|^2 + \frac{|B|^4}{p}\Big)\log p.
$$

For each $z \in \F_p$, let $r(z)$ be the number of solutions to $yz=x$ with $x,y \in B$ and let $r'(z)$ be the number of solutions to $y'z=x'$ for some $x',y' \in B-B$. We claim that if $r(z) > 0$ then $r(z) \leq r'(z)$.

To see this, suppose that $r(z) > 0$. Choose $x_0,y_0\in B$ with $y_0z=x_0$. For each representation $yz=x$ with $x,y \in B$, we obtain a representation $(y-y_0)z=x-x_0$, where $y-y_0, x-x_0 \in B-B$. Hence $r(z) \leq r'(z)$, as claimed.

Since there are $O(|B|^2)$ solutions to $xz=yw$ with $x,y,z,w \in B$ and at least one of $x,y,z,w$ being zero, we have
$$
E_{\times}(B) = \sum_{z \in \F_p}r(z)^2 + O(|B|^2) \leq \sum_{z \in \F_p}r'(z)^2 + O(|B|^2) \leq E_{\times}(B-B) + O(|B|^2).
$$
Since $B-B$ is proper by construction and $|B-B| \ll |B|$, we may apply Proposition \ref{prop:x-energy-symmetric} to get
$$
E_{\times}(B-B) \ll \Big(|B|^2 + \frac{|B|^4}{p}\Big)\log p.
$$
This leads to the desired bound for $E_{\times}(B)$.
\end{proof}

In the remainder of this section we prove Proposition \ref{prop:x-energy-symmetric}. Let $A$ be a symmetric proper GAP of rank $2$ of the form
$$
A = \{a_1x_1 + a_2x_2: |x_i| \leq H_i\}
$$
for positive integers $H_1,H_2$ and elements $a_1,a_2 \in \F_p\setminus\{0\}$. Without loss of generality, assume that $H_1 \leq H_2$. The properness of $A$ implies that $|A| \asymp H_1H_2$. For each $z \in \F_p$, let $r(z)$ be the number of solutions to $yz=x$ with $x,y \in A$. It suffices to prove that
$$ \sum_{z \in \F_p} r(z)^2 \ll \Big(|A|^{2} + \frac{|A|^4}{p} \Big)\log p.
$$
We can interpret $r(z)$ as the number of lattice points in a convex body as follows. Define the lattice $\Gamma_z \subset \Z^{4}$ by
$$ 
\Gamma_z = \{(x_1,x_2,y_1,y_2) \in \Z^{4}: z(a_1x_1+a_2x_2) \equiv a_1y_1+a_2y_2\pmod{p}\}
$$
and define the box $D \subset \R^{4}$ by
$$
D = \{(x_1,x_2,y_1,y_2) \in \R^{4}: |x_1|, |y_1| \leq H_1 \textnormal{ and } |x_2|,|y_2| \leq H_2\}.
$$
Then we have
$$ 
r(z) = |D \cap \Gamma_z|.
$$
For $1 \leq i \leq 4$, let $\lambda_i = \lambda_i(z)$ be the $i$th successive minimum of $D$ with respect to $\Gamma_z$; i.e. $\lambda_i$ is the smallest $\lambda$ such that $\lambda D$ contains $i$ linearly independent vectors from $\Gamma_z$. By Minkowski's Theorem (Theorem \ref{mink2nd}) we have
\begin{equation}\label{eq:mink2nd}
\lambda_1\lambda_2\lambda_3\lambda_{4} \asymp \frac{p}{H_1^{2}H_2^{2}}
\end{equation}
and by Lemma \ref{henkcount} we have
\begin{equation}\label{eq:rz-bound}
r(z) = |D \cap \Gamma_z| \ll \prod_{j=1}^{4} \max(1, \lambda_j^{-1}).
\end{equation}
For $1 \leq s \leq 4$, let $Z_s$ be the set of $z$ such that $\lambda_s(z) \leq 1$ and $\lambda_{s+1}(z) > 1$ (setting $\lambda_5(z) = +\infty$). If $z$ does not lie in any $Z_s$, then $\lambda_1(z) > 1$ which implies that $D \cap \Gamma_z = \{0\}$ and $r(z)=1$. Thus
$$
\sum_{z \notin Z_1\cup\cdots\cup Z_4} r(z)^2 \leq \sum_{z \in \F_p}r(z) = |A|^2.
$$
Hence it suffices to prove that 
\begin{equation}\label{x-energy-Z_s}
\sum_{z \in Z_s} r(z)^2 \ll \Big(|A|^{2} + \frac{|A|^4}{p}\Big)\log p
\end{equation} 
for each $s \in\{1,2,3,4\}$.

\subsection{Case $s \leq 2$.} \label{sec:proof-case1}

For $z \in Z_s$ with $s \leq 2$, we must have $\lambda_1 = \lambda_1(z) \leq 1$. Pick $v_z = (x_1,x_2,y_1,y_2)$ to be a nonzero vector in $\lambda_1D \cap \Gamma_z$.  Since $v_z= (x_1,x_2,y_1,y_2) \in \lambda_1D$, we have $|x_1|,|y_1| \leq \lambda_1H_1$ and $|x_2|,|y_2| \leq \lambda_1H_2$. Since at least one of the coordinates $x_1,x_2,y_1,y_2$ is nonzero and $H_1 \leq H_2$, we must have $\lambda_1 \geq 1/H_2$. For $\lambda \in [1/H_2, 1]$, define
$$
Z_s(\lambda) = \{z \in Z_s: \lambda \leq \lambda_1(z) \leq \min(2\lambda,1)\}.
$$
If $\lambda \in [1/H_2,1]$ and $z \in Z_s(\lambda)$, then $\lambda_1(z) \leq 2\lambda$ and thus the number of possibilities for $v_z = (x_1,x_2,y_1,y_2)$ is 
$$
\ll (1+\lambda H_1)^2 (1+\lambda H_2)^2 \ll (1+\lambda^2H_1^2) \lambda^2H_2^2.
$$
Note that $z$ is determined uniquely by $v_z$, since otherwise we must have $a_1x_1+a_2x_2 \equiv a_1y_1 + a_2y_2 \equiv 0\pmod{p}$ which implies that $x_1 \equiv x_2 \equiv y_1\equiv y_2\equiv 0\pmod{p}$ by the properness of $A$. Hence it follows that 
\begin{equation}\label{eq:Zs-bound}
|Z_s(\lambda)| \ll (1+\lambda^2H_1^2) \lambda^2H_2^2
\end{equation}
for all $\lambda \in [1/H_2,1]$. From \eqref{eq:rz-bound} we have
$$
r(z) \ll \prod_{j=1}^s \lambda_j^{-1} \ll \lambda_1^{-s} \ll \lambda^{-s}
$$
for $z \in Z_s(\lambda)$. Hence
$$
\sum_{z \in Z_s(\lambda)} r(z)^2 \ll \lambda^{-2s} |Z_s(\lambda)| \ll \lambda^{2-2s}H_2^2 + \lambda^{4-2s} |A|^2.
$$
In the case $s=1$, the bound above is clearly $\ll |A|^2$ and the desired upper bound \eqref{x-energy-Z_s} follows from dyadic summation. In the case $s=2$, the bound above is also $\ll |A|^2$ provided that $\lambda \gg 1/H_1$. Thus for $s=2$ it remains to prove that
\begin{equation}\label{eq:x-energy-Z2}
\sum_{\substack{z \in Z_2 \\ \lambda_1(z) \in [1/H_2,1/H_1]}} r(z)^2 \ll \Big(|A|^{2} + \frac{|A|^4}{p}\Big)\log p.
\end{equation}

For those $z$ included in the summation in \eqref{eq:x-energy-Z2}, we will show that
\begin{equation}\label{eq:lambda2-bound}
\lambda_2(z)^2 \gg \min\Big(\frac{1}{H_1^2}, \frac{p}{H_2^2}\Big).
\end{equation}
Once \eqref{eq:lambda2-bound} is established, we then have from \eqref{eq:rz-bound} that
$$
r(z)^2 \ll \lambda_1(z)^{-2}\lambda_2(z)^{-2} \ll \lambda_1(z)^{-2} \max\Big(H_1^2, \frac{H_2^2}{p}\Big).
$$
Moreover, from \eqref{eq:Zs-bound} it follows that the number of summands in \eqref{eq:x-energy-Z2} with $\lambda_1(z)\sim \lambda$  for some $\lambda \in [1/H_2, 1/H_1]$ is $\ll (1+\lambda^2H_1^2)\lambda^2H_2^2 \ll \lambda^2H_2^2$. Hence for $\lambda \in [1/H_2,1/H_1]$ we have
$$
\sum_{\substack{z \in Z_2 \\ \lambda_1(z) \sim \lambda}} r(z)^2 \ll H_2^2 \max\Big(H_1^2, \frac{H_2^2}{p}\Big) \ll |A|^2 + \frac{|A|^4}{p}.
$$
The desired upper bound \eqref{eq:x-energy-Z2} then follows from dyadic summation.

It remains to prove \eqref{eq:lambda2-bound}. For $z \in Z_2$, pick two linearly independent vectors $v_z = (x_1,x_2,y_1,y_2)$ and $v_z' = (x_1',x_2',y_1',y_2')$ in $\lambda_2D \cap \Gamma_z$. If $\lambda_2(z) \geq 1/H_1$ then \eqref{eq:lambda2-bound} follows immediately. If $\lambda_2(z) < 1/H_1$, then we must have $x_1=y_1=x_1'=y_1'=0$, and thus
$$
zx_2 \equiv y_2\pmod{p}, \ \ zx_2' \equiv y_2'\pmod{p}.
$$
It follows that $x_2y_2'-x_2'y_2 \equiv 0\pmod{p}$. Since $v_z,v_z'$ are two linearly independent vectors, we must have $x_2y_2'-x_2'y_2 \neq 0$ and hence $|x_2y_2'-x_2'y_2| \geq p$. On the other hand, since $|x_2|,|y_2|,|x_2'|,|y_2'| \leq \lambda_2H_2$, we have
$$
p \leq |x_2y_2'-x_2'y_2| \ll (\lambda_2H_2)^2,
$$
which implies that $\lambda_2^2 \gg p/H_2^2$, thus establishing \eqref{eq:lambda2-bound}.

\subsection{Case $s \geq 3$.} \label{sec:proof-case2}

Now we treat the case where $s \in \{3,4\}$. If $s=4$, then from \eqref{eq:mink2nd} and \eqref{eq:rz-bound} we have
$$ r(z) \ll \prod_{j=1}^4 \lambda_j^{-1} \ll p^{-1}H_1^2H_2^2 = p^{-1}|A|^2 $$
for $z \in Z_4$, and thus
$$
\sum_{z \in Z_4} r(z)^2 \ll p (p^{-1}|A|^2)^2 = \frac{|A|^4}{p},
$$
establishing \eqref{x-energy-Z_s}.
Hence it remains to deal with the case $s=3$. For $z \in Z_3$, from \eqref{eq:mink2nd} and \eqref{eq:rz-bound} we have
$$
r(z) \ll \prod_{j=1}^3 \lambda_j^{-1} \ll p^{-1}|A|^2 \lambda_4.
$$
To make effective use of this, we need an upper bound for $\lambda_4$. Let $\Gamma_z^*$ and $D^*$ be the dual lattice and the dual body of $\Gamma_z$ and $D$, respectively. Then
$$
D^* = \{(u_1,u_2,v_1,v_2) \in \R^{4}: H_1|u_1| + H_2|u_2| + H_1|v_1| + H_2|v_2| \leq 1\}.
$$
We claim that
$$
\Gamma_z^* = \Z^{4} + \{ p^{-1}(za_1t, za_2t, -a_1t, -a_2t) : t \in \Z\}.
$$
Clearly the right-hand side above is contained in $\Gamma_z^*$. To establish the other direction, pick any vector $(u_1,u_2,v_1,v_2) \in \Gamma_z^*$. Since $(p,0,0,0),(0,p,0,0), (0,0,p,0),(0,0,0,p) \in \Gamma_z$, we must have $pu_1,pu_2, pv_1,pv_2 \in \Z$.
Since $(0, 0, -a_2,a_1),(1, 0, z, 0), (0, 1, 0, z)\in \Gamma_z$, we must have
$$
a_1v_2 - a_2v_1\in\Z, \ \ u_1+zv_1\in\Z, \ \ u_2+zv_2\in\Z.
$$
Choose $t \in \Z$ such that $-a_1t \equiv pv_1\pmod{p}$. Then the relations above imply that
$$
(pu_1, pu_2, pv_1, pv_2) \equiv (za_1t, za_2t, -a_1t, -a_2t)\pmod{p}.
$$
This proves the claimed description of $\Gamma_z^*$.

Now let $\lambda_1^* = \lambda_1^*(z)$ be the first successive minimum of $\Gamma_z^*$ with respect to $D^*$. By Lemma \ref{polar-minima} we have $\lambda_1^*\lambda_{4} \asymp 1$. Since $\lambda_4 > 1$, we have $\lambda_1^*\ll 1$ and
\begin{equation}\label{eq:rz-bound2}
r(z) \ll p^{-1} |A|^2 (\lambda_1^*)^{-1}.
\end{equation}
We may assume that $\lambda_1^* < 1$, since otherwise we have $r(z) \ll p^{-1}|A|^2$ and the desired estimate \eqref{x-energy-Z_s} follows immediately as in the case $s=4$.

Pick $v_z = (u_1,u_2,v_1,v_2)$ to be a nonzero vector in $\lambda_1^*D^* \cap \Gamma_z^*$. Then $|u_1|,|v_1|\leq \lambda_1^*/H_1$, $|u_2|,|v_2| \leq \lambda_1^*/H_2$, and
$$
(pu_1,pu_2,pv_1,pv_2) \equiv (za_1t, za_2t, -a_1t,-a_2t)\pmod{p}
$$
for some $t \in \Z$. We must have $t \neq 0$, since otherwise $u_1,u_2,v_1,v_2 \in \Z$ and hence they must all be zero, a contradiction. Moreover, we must have $\lambda_1^* \geq 1/p$, since otherwise $|u_1|,|u_2|,|v_1|,|v_2| < 1/p$ and they must all be zero, again a contradiction.

For $1/p \leq \lambda < 1$, define
$$
Z_3(\lambda) = \{z \in Z_3: \lambda/2 \leq \lambda_1^*(z) < \lambda\}.
$$
If $z \in Z_3(\lambda)$ then both $t\pmod{p}$ and $zt\pmod{p}$ lie in the Bohr set
$$
B = B_{\lambda} := \{x \in \F_p: \|a_1x/p\| \leq \lambda/H_1 \text{ and } \|a_2x/p\| \leq \lambda/H_2\},
$$
and thus $z \in B/B$. It follows that $|Z_3(\lambda)| \leq |B_{\lambda}|^2$ and thus
$$
\sum_{z \in Z_3(\lambda)} r(z)^2 \ll |B_{\lambda}|^2 p^{-2} |A|^4 \lambda^{-2}.
$$
We apply Proposition \ref{prop:bohr} with $\Gamma = (a_1,a_2)$ and $\eta = (\lambda/H_1, \lambda/H_2)$ to estimate $|B_{\lambda}|$. Since $B$ contains nonzero elements, we have $t(\Gamma,\eta) \geq 1$. By the properness of $A$, we have $\delta = \delta(\Gamma,\eta) \geq \lambda$. It follows that
$$
|B_{\lambda}| \ll \lambda^{-1} \cdot \frac{\lambda^2}{H_1H_2}p = \frac{\lambda}{|A|}p,
$$
and hence
$$
\sum_{z \in Z_3(\lambda)} r(z)^2 \ll |A|^2,
$$
and the desired estimate \eqref{x-energy-Z_s} follows by dyadic summation. 

\subsection{On multiplicative energies of GAPs of higher rank}\label{sec:general-d}

In this subsection, we briefly point out the obstacle which prevents us from generalizing Theorem \ref{x-energy} to GAPs of rank $d > 2$. Let $A$ be a proper GAP of rank $d > 2$ of the form
$$
A = \{a_1x_1 + \cdots + a_dx_d : |x_i| \leq H\},
$$
where $a_1,\cdots,a_d \in \F_p\setminus\{0\}$ and $H$ is a positive integer. One can still define the lattice $\Gamma_z \subset \Z^{2d}$ for $z \in \F_p$ and the box $D \subset \R^{2d}$ as before, and try to analyze the successive minima $\lambda_1,\cdots,\lambda_{2d}$ of $D$ with respect to $\Gamma_z$. As before, for $1 \leq s \leq 2d$, let $Z_s$ be the set of $z$ such that $\lambda_s(z) \leq 1$ and $\lambda_{s+1}(z) > 1$.

We expect the arguments in Section \ref{sec:proof-case1} for the case $s \leq d$ to go through without difficulties. For the case $s > d$, the arguments in Section \ref{sec:proof-case2} led us to bounding Bohr sets of the form
$$
B = B_{\lambda}:= \{x \in \F_p: \|a_ix/p\| \leq \lambda/H\text{ for each }1 \leq i \leq d\},
$$
where $1/p \leq \lambda < 1$. Defining $Z_s(\lambda)$ as before, we have
$$
\sum_{z \in Z_s(\lambda)} r(z)^2 \ll |B_{\lambda}|^2 \max_{z \in Z_s(\lambda)} r(z)^2.
$$
Analogous to \eqref{eq:rz-bound2} we have
$$
r(z) \ll p^{-1}|A|^2 (\lambda_1^* \cdots \lambda_{2d-s}^*)^{-1} \ll p^{-1}|A|^2 \lambda^{s-2d}
$$
for $z \in Z_s(\lambda)$. Applying Proposition \ref{prop:bohr} with $\Gamma = (a_1,\cdots,a_d)$ and $\eta = (\lambda/H, \cdots, \lambda/H)$ to estimate $|B_{\lambda}|$, we obtain
$$
|B_{\lambda}| \ll \delta^{t-d} \Big(\frac{\lambda}{H}\Big)^d p \ll \frac{\lambda^t}{|A|}p
$$
since $\delta \geq\lambda$. Combining the inequalities above, we get
$$
\sum_{z \in Z_s(\lambda)} r(z)^2 \ll \lambda^{2(s+t-2d)} |A|^2.
$$
This is acceptable if $t = t(\Gamma,\eta)$ satisfies $t \geq 2d-s$, but we are unable to make this connection between $s$ and $t$ for general $d$.

\section{Application to Character Sums}\label{sec:char-sum}

In this section we prove Theorem \ref{char-sum}, the Burgess-type estimate for character sums over rank-$2$ GAPs. Let $A \subset \F_p$ be a proper GAP of rank $2$ of the form
$$
A = \{a_0 + a_1x_1 + a_2x_2: 1 \leq x_i \leq H_i\},
$$ 
where $H_1,H_2$ are positive integers, $a_0 \in \F_p$, and $a_1,a_2 \in \F_p\setminus\{0\}$.  Assume that $|A| \geq p^{1/4+10\eps}$ for some sufficiently small $\eps  > 0$. We may assume that $p$ is sufficiently large in terms of $\eps$, since otherwise the claimed bound is trivial. By writing $A$ as a disjoint union of smaller GAPs, we may assume that $|A| \leq p^{1/2}$ (say). Define
$$
B = \{a_1x_1 + a_2x_2: 1 \leq x_i \leq H_i p^{-2\eps}\} \text{ and }J = [1,p^{\eps}].
$$
We may assume that $H_i \geq p^{5\eps}$ for each $i$, since otherwise $A$ is the disjoint union of arithmetic progressions, each of which has length at least $|A|p^{-5\eps} \geq p^{1/4+5\eps}$, and the desired conclusion follows from Burgess' estimate.

Now for any $y \in B$ and $t \in J$ we have
$$
\Big|\sum_{x \in A}\chi(x) - \sum_{x \in A}\chi(x + yt)\Big| \leq |A \setminus(A +yt)| + |(A+yt) \setminus A| \ll |A|p^{-\eps}.
$$
 Hence 
\begin{equation}\label{char-sum-averages}
\sum_{x \in A}\chi(x) = \frac{1}{|J||B|}\sum_{\substack{y \in B \\ t \in J}}\sum_{x \in A}\chi(x + yt) + O(|A|p^{-\eps}).
\end{equation}
Observe that
$$
\Big|\sum_{\substack{y \in B \\ t \in J}}\sum_{x \in A}\chi(x + yt)\Big| \leq \sum_{\substack{x \in A \\ y \in B}}\Big|\sum_{t \in J}\chi(xy^{-1} + t)\Big| = \sum_{u \in \F_p}\nu(u)\Big|\sum_{t \in J}\chi(u + t) \Big|,
$$
where $\nu(u) = |\{(x,y) \in A\times B : xy^{-1} = u \pmod{p}\}|$. Applying Lemma \ref{char-sum-energy} we obtain
\begin{equation}\label{final-char-estimate}
\frac{1}{|J||B|}\Big|\sum_{\substack{y \in B \\ t \in J}}\sum_{x \in A}\chi(x + yt)\Big| \ll_r |A|^{1-\frac{1}{r}}|B|^{-\frac{1}{r}}(E_{\times}(A)E_\times (B))^{\frac{1}{4r}}(\sqrt{p} + |J|^{-r}p )^{\frac{1}{2r}}
\end{equation}
for any positive integer $r$. Choose $r = \lfloor 1/(2\eps)\rfloor$ so that $|J|^{-r}p \ll \sqrt{p}$.
Since $|A| \leq p^{1/2}$ and $|B| \gg |A|p^{-4\eps}$, by Theorem \ref{x-energy} we have
$$
E_\times(A) \ll |A|^2 \log p, \ \ E_\times (B) \ll |B|^2 \log p.
$$
Inserting these estimates into \eqref{final-char-estimate}, we obtain
$$
 \frac{1}{|J||B|}\Big|\sum_{\substack{y \in B \\ t \in J}}\sum_{x \in A}\chi(x + yt)\Big| \ll_r |A|^{1-\frac{1}{2r}} |B|^{-\frac{1}{2r}} p^{\frac{1}{4r}} (\log p)^{\frac{1}{2r}} \ll |A|^{1-\frac{1}{r}} p^{\frac{2\eps}{r}+\frac{1}{4r}} (\log p)^{\frac{1}{2r}}.
$$
Since $|A| \geq p^{1/4+10\eps}$, the upper bound above is
$$
\ll |A|(\log p)^{\frac{1}{2r}} \cdot p^{-\frac{1}{r}(\frac{1}{4}+10\eps) + \frac{2\eps}{r} + \frac{1}{4r}} \ll |A| p^{-\frac{5\eps}{r}}.
$$
The desired estimate follows by combining this with \eqref{char-sum-averages}.

\bibliographystyle{plain}
\bibliography{biblio}

\end{document}